\documentclass[11pt]{article}

\usepackage{fullpage}
\usepackage{amsmath, amsthm, amsfonts, amssymb, amstext, mathrsfs, enumerate}
\usepackage{graphicx, ragged2e, lscape, framed, xcolor}
\usepackage{subfiles}
\usepackage{todonotes}

\theoremstyle{plain}
\newtheorem{theorem}{Theorem}[section]
\newtheorem{lemma}[theorem]{Lemma}

\newtheorem{conjecture}[theorem]{Conjecture}

\newtheorem{problem}[theorem]{Problem}
\newtheorem{claim}{Claim}[subsection]

\numberwithin{equation}{section}
\allowdisplaybreaks

\newcommand{\affl}[3]{\noindent #1, Email: {\tt #2}\\ \textsc{#3}\\[1.5pt]}

\usepackage[pagebackref]{hyperref}
\hypersetup{
	colorlinks=true,
    urlcolor=purple,
	linkcolor=purple,
    citecolor=purple,
}

\DeclareMathOperator{\Tr}{Tr}
\DeclareMathOperator{\supp}{supp}
\DeclareMathOperator{\N}{N}
\DeclareMathOperator{\Gal}{Gal}
\def\ol{\overline}
\def\q{\mbox{$q^{\#}$}}
\def\Q{\mathbb Q}
\def\K{\mathbb K}
\def\C{\mathbb C}

\title{\textbf{A proof of the cyclotomic conjecture and the non-existence of almost Moore digraphs}}
\author{Jaskaran Kaur \and Hitesh Kumar}
\date{}

\begin{document}
\maketitle
\begin{abstract}
For $n>2$ and $k>1$, define the polynomial
\[F_{n,k}(x) = \Phi_n(1 + x + \cdots + x^k),\]
where $\Phi_n$ denotes the $n$-th cyclotomic polynomial. The \emph{cyclotomic conjecture} proposed by Gimbert (1999) exactly describes the irreducibility of $F_{n,k}(x)$ over $\Q$ in terms of $n$ and $k$. Conde, Gimbert, Gonz\'{a}lez, Miller and Miret (2014) established that the cyclotomic conjecture, if true, would imply the non-existence of almost Moore digraphs - a well-known open question concerning the directed degree-diameter problem. In this article, we prove the cyclotomic conjecture and, as a consequence, show that there are no almost Moore digraphs with maximum out-degree $d$ and diameter $k$ for any $d>1$ and $k>2$.
\end{abstract}

\noindent
\textbf{Keywords:} cyclotomic polynomial, directed degree-diameter problem, almost Moore digraph 

\noindent
\textbf{MSC2020:} 05C20, 05C50, 11R18 

\section{Introduction}

\subsection{Motivation and background}

Given natural numbers $\Delta$ and $D$, the \emph{degree-diameter problem} asks for the largest order $n_{\Delta, D}$ of a graph with maximum degree at most $\Delta$ and diameter at most $D$. This is an extensively studied problem, and several variants of it have been investigated. We refer to the excellent dynamic survey by Miller and \v{S}ir\'{a}\v{n} \cite{Miller_Siran_2005}. 

In this paper, we focus on the directed version. 

\begin{problem}[Directed degree-diameter problem] Given natural numbers $d$ and $k$, let $\mathcal{D}(d,k)$ denote the collection of digraphs with maximum out-degree at most $d$ and diameter at most $k$. Determine the largest order $n_{d, k}$ of a digraph in $\mathcal{D}(d,k)$.     
\end{problem}

Define 
\[ M_{d,k}:= 1 + d + \cdots +d^k.\]
It is known that
\begin{equation}\label{eq:Moore_bound}
  n_{d,k}\le M_{d,k}.  
\end{equation}
The quantity $M_{d,k}$ is called the (directed) \emph{Moore bound} and the digraphs in $\mathcal{D}(d,k)$ of order $M_{d,k}$ (if they exist) are called \emph{Moore digraphs}.

It is well-known that Moore digraphs are rare and exist only in the trivial cases $d = 1$ (directed cycles $C_{k+1}$) or $k=1$ (complete digraphs $K_{d+1}$); see \cite{Plesnik_Znam_1974, Bridges_Toueg_1980, Conde_Gimbert_Gonzalez_Miller_Miret_2014}.

Naturally, then one wonders if there exist digraphs in $\mathcal{D}(d,k)$ of order $M_{d,k}-1$. Such digraphs are called \emph{almost Moore digraphs} or simply $(d,k)$-\emph{digraphs}. 

Fiol, Alegre and Yebra \cite{Fiol_Alegre_Yebra_1983} proved that the directed line graph of the directed complete graph $K_{d+1}$ is a $(d,2)$-digraph. Gimbert \cite{Gimbert_1999, Gimbert_2001} completely classified the $(d,2)$-digraphs. Indeed, for $d\ge 3$ the $(d,2)$-digraphs are precisely the directed line graphs of directed complete graphs, and there are three non-isomorphic $(2,2)$-digraphs. Conde, Gimbert, Gonz\'{a}lez, Miret and Moreno \cite{Conde_Gimbert_Gonzalez_Miret_Moreno_2008, Conde_Gimbert_Gonzalez_Miret_Moreno_2013} proved the non-existence of $(d,3)$-digraphs and $(d,4)$-digraphs for $d > 1$. 

Miller and Fri\v{s} \cite{Miller_Fris_1993} proved that there are no $(2,k)$-digraphs for $k\ge 3$. Baskoro, Miller, \v{S}ir\'{a}\v{n} and Sutton \cite{Baskoro_Miller_Siran_Sutton_2005} (cf. \cite{Baskoro_Miller_Plesnik_Znam_1995}) established the non-existence of $(3,k)$-digraphs for $k\ge 3$. See \cite{Cholily_2011, Sillasen_2015, Conde_Miller_Miret_Kumar_2015, Lopez_Messegue_Miret_2023,Miret_Simanjuntak_Simon_2023, Baskoro_Messegue_Miret_2025,Messegue_Miret_Sillasen_2026} for other progress. The full conjecture remains wide open.

\subsection{Connection to cyclotomic polynomials}

For $n\in \N$, the $n$-th \emph{cyclotomic polynomial} is defined to be 
\[ \Phi_n(x):= \prod_{\substack{1\le j\le n\\ \gcd(j,n) = 1}} \left(x - e^{2i\pi \frac{j}{n}} \right).\]
In words, $\Phi_n(x)$ is the minimal polynomial over $\Q$ with integer coefficients for any primitive $n$-th root of unity $\zeta_n$. 

In 1999, Gimbert \cite{Gimbert_1999} proposed the following conjecture.

\begin{conjecture}[Cyclotomic conjecture \cite{Gimbert_1999}]\label{conj:cyclotomic} Let $n> 2$, $k>1$ and
\[ F_{n,k}(x) := \Phi_n(1 + x + \cdots + x^k).\]
\begin{enumerate}[$(i)$]
    \item If $k$ is even, then $F_{n,k}(x)$ is reducible in $\Q[x]$ if and only if $n|(k+2)$. In that case, $F_{n,k}(x)$ has precisely two irreducible factors.
    \item If $k$ is odd, then $F_{n,k}(x)$ is reducible in $\Q[x]$ if and only if $n$ is even and $n|2(k+2)$. In that case, $F_{n,k}(x)$ has precisely two irreducible factors.
\end{enumerate}
\end{conjecture}

 Gimbert \cite{Gimbert_1999} related the irreducibility of the polynomial $F_{n,k}(x)$ over $\Q[x]$ to the factorization of characteristic polynomials of $(d,k)$-digraphs (see \cite[Proposition 2]{Conde_Gimbert_Gonzalez_Miller_Miret_2014}). In 2014, Conde, Gimbert, Gonz\'{a}lez, Miller and Miret \cite{Conde_Gimbert_Gonzalez_Miller_Miret_2014} proved that the truth of Conjecture \ref{conj:cyclotomic} indeed implies the non-existence of almost Moore digraphs. 

\begin{theorem}[\cite{Conde_Gimbert_Gonzalez_Miller_Miret_2014}]\label{thm:conditional_non_existence}
Let $d>1$ and $k>2$. If the cyclotomic conjecture holds for every pair $(n,k)$, where $2<n\le d + \cdots +d^k$, then $(d,k)$-digraphs do not exist.  
\end{theorem}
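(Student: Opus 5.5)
The plan follows the standard spectral approach for almost Moore digraphs. Let $G$ be a $(d,k)$-digraph with $k>2$ and adjacency matrix $A$, so $G$ has order $N=M_{d,k}-1 = d+\cdots+d^k$. Almost Moore digraphs are $d$-regular and have a well-defined \emph{repeat} function $r$: for each vertex $u$ there is a unique vertex $r(u)$ reached by two walks of length at most $k$ from $u$. This $r$ is a permutation of the vertex set, with permutation matrix $P$, and we have the matrix identity
\[ I + A + \cdots + A^k = J + P, \]
where $J$ is the all-ones matrix. Since $A$ commutes with $J$ and hence with $P$, we can diagonalize simultaneously over $\C$, at least on the relevant invariant subspaces. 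The all-ones eigenvector gives the eigenvalue $d$, and every other eigenvalue $\lambda$ of $A$ satisfies $1+\lambda+\cdots+\lambda^k = \omega$ for some eigenvalue $\omega$ of $P$, which is a root of unity. Decomposing $P$ into cycles, the characteristic polynomial of $P$ is $\prod_{n} \Phi_n(x)^{a_n}$. Here $a_n$ is the number of eigenvalues that are primitive $n$-th roots of unity, and $n$ ranges over divisors of cycle lengths, so $n\le N = d+\cdots+d^k$. This is the range in the hypothesis.

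The next step is Gimbert's relation between the two characteristic polynomials (Proposition 2 of the cited 2014 paper, which we take as given): the characteristic polynomial of $A$ is
\[ (x-d)\prod_{n} F_{n,k}(x)^{a_n}\cdot(\text{correction from } n=1),\]
where the $n=1$ factor is $\bigl(x(1+x+\cdots+x^{k-1})\bigr)^{a_1-1}$. The $n=2$ factor $\Phi_2(1+\cdots+x^k)$ also needs separate handling. Because $A$ has integer entries, its characteristic polynomial lies in $\Z[x]$. Each irreducible rational factor must then appear with multiplicity consistent with traces: $\Tr(A^\ell)$ is a nonnegative integer, and $\Tr(A^\ell)=0$ for small $\ell$ because $G$ has no short cycles except those forced through $r$. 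Next we use the cyclotomic conjecture for $2<n\le N$. If $F_{n,k}$ is irreducible, the roots $\lambda$ with $1+\cdots+\lambda^k$ a primitive $n$-th root of unity contribute a full Galois orbit, so that $\sum\lambda^\ell$ over the orbit is a rational trace. In the exceptional cases ($n\mid k+2$, or $n$ even with $n\mid 2(k+2)$) there are exactly two factors, which can be identified explicitly; they are essentially $\Phi_n$-type pieces coming from $x^{k+2}$ relations, since $(1-x)(1+\cdots+x^k)=1-x^{k+1}$.

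The main obstacle, and the heart of the 2014 argument, is the counting step that yields a contradiction. We compute $\Tr(A)=0$ (no loops) and $\Tr(A^\ell)$ for $\ell\le k$ in two ways: combinatorially, via fixed points of $r$ and short cycles, and spectrally, as $d^\ell+\sum_n a_n\,S_{n,\ell}$, where $S_{n,\ell}$ is the power sum of the roots of $F_{n,k}$. Irreducibility (or the known two-factor splitting) lets us express $S_{n,\ell}$ through Ramanujan sums in $n$. This produces linear equations in the $a_n$ and in the cycle structure of $r$, for example $\sum_n a_n\mu(n)$-type identities. I would first try to show that these force $r$ to be the identity or to have a very restricted cycle structure. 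The contradiction would then come from the known result that $r=\mathrm{id}$ is impossible for $k\ge 3$, or from a parity/divisibility clash between $N$ and $\sum a_n\varphi(n)$. I expect the delicate case analysis around the exceptional $n$ dividing $2(k+2)$ to be the hardest part.
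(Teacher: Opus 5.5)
The paper gives no proof of this statement; it quotes it from Conde, Gimbert, Gonz\'alez, Miller and Miret (2014). Your outline therefore has to stand on its own, and it has a genuine gap: the step where the cyclotomic conjecture is actually used, and the contradiction itself, are both missing.

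First, the formula you attribute to Proposition 2 cannot be right. The polynomial $(x-d)\,F_{1,k}(x)^{a_1-1}\prod_{n>1}F_{n,k}(x)^{a_n}$ has degree $1+k(N-1)$, not $N$. On the $\zeta_n$-eigenspace $E_{\zeta_n}$ of $P$ (dimension $a_n$, contained in $\mathbf{1}^{\perp}$) one has $s_k(A)=\zeta_n I$. So the eigenvalues of $A$ there are roots of $q(x)=s_k(x)-\zeta_n$, with multiplicities summing to $a_n$, not $k a_n$. Since $A$ is rational, the characteristic polynomial of $A$ on $\ker\Phi_n(P)$ is a rational polynomial of degree $a_n\varphi(n)$ built from irreducible factors of $F_{n,k}$. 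This is exactly where the conjecture is used:
\begin{itemize}
\item if $F_{n,k}$ is irreducible, the characteristic polynomial of $A|_{E_{\zeta_n}}$ is $q^{a_n/k}$, so $k\mid a_n$;
\item in the exceptional case it is $(x+\zeta_n^{-1})^{b}r^{c}$ with $b+(k-1)c=a_n$.
\end{itemize}
You never extract this constraint, and without it nothing can clash. The relation $\sum_n a_n\varphi(n)=N$ is just a dimension count, and nothing in your outline forces $r=\mathrm{id}$.

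Second, the contradiction needs precise trace information that you leave vague. The combinatorial input is that a $(d,k)$-digraph has no cycle of length $\ell<k$. If $u$ lay on one, its successor on the cycle would be reached from $u$ by walks of lengths $1$ and $\ell+1\le k$, giving $u$ a second repeat besides $u$ itself. Hence $\Tr(A^\ell)=0$ for $1\le\ell\le k-1$.

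On the spectral side, Newton's identities show that the power sums of the roots of $s_k(x)-\zeta_n$ equal $-1$ for $1\le\ell\le k-1$. So, given $k\mid a_n$, the blocks attached to an irreducible $F_{n,k}$ contribute the same amount $-a_n\varphi(n)/k$ to every such trace. The block $E_1\cap\mathbf{1}^{\perp}$ has eigenvalues $0$ and $k$-th roots of unity $\omega$, and rational trace. It contributes equally to $\Tr(A)$ and $\Tr(A^{k-1})$, because $\omega^{k-1}=\ol{\omega}$. Therefore $0=\Tr(A^{k-1})-\Tr(A)=d^{k-1}-d+(\text{exceptional terms})$.

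The Galois orbit of an exceptional block contributes $(b-c)\bigl(c_M(3)-\mu(M)\bigr)$ to this difference. Here $M\mid k+2$ is the order of $-\zeta_n^{-1}$ and $c_M$ is a Ramanujan sum. This term vanishes unless $3\mid M$. So this comparison already yields the contradiction when $k\not\equiv 1\pmod 3$. The remaining exceptional blocks need a finer analysis, which your proposal does not supply. As written, your outline sets up the standard framework but does not prove the statement.
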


Gimbert \cite{Gimbert_1999} proved that $F_{2,k}(x)$ is irreducible for all $k>1$. Lenstra Jr. and Poonen \cite{Lenstra_Poonen_1998} (cf. \cite{Gimbert_2001}) proved the cyclotomic conjecture for $k=2$. Conde, Gimbert, Gonz\'{a}lez, Miret and Moreno established the cases $k=3$ and $k=4$ in \cite{Conde_Gimbert_Gonzalez_Miret_Moreno_2008} and \cite{Conde_Gimbert_Gonzalez_Miret_Moreno_2013}, respectively. 

\subsection{Our contribution}

We settle Conjecture \ref{conj:cyclotomic} in full generality. 

\begin{theorem}\label{thm:cyclotomic}
Let $n > 2$, $k>1$ and
\[ F_{n,k}(x) := \Phi_n(1 + x + \cdots + x^k).\]
\begin{enumerate}[$(i)$]
    \item If $k$ is even, then $F_{n,k}(x)$ is reducible in $\Q[x]$ if and only if $n|(k+2)$. In that case, $F_{n,k}(x)$ has precisely two irreducible factors.
    \item If $k$ is odd, then $F_{n,k}(x)$ is reducible in $\Q[x]$ if and only if $n$ is even and $n|2(k+2)$. In that case, $F_{n,k}(x)$ has precisely two irreducible factors.
\end{enumerate}
\end{theorem}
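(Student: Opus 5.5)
Let $g(x)=1+x+\cdots+x^k=\frac{x^{k+1}-1}{x-1}$ and fix a primitive $n$-th root of unity $\zeta=\zeta_n$. The plan is to use the standard reduction through the field $K=\Q(\zeta)$. Since $\Phi_n$ is irreducible, $F_{n,k}(x)=\prod_{\sigma}(g(x)-\sigma(\zeta))$ over $\sigma\in\Gal(K/\Q)$. Moreover, the irreducible factors of $F_{n,k}$ over $\Q$ correspond bijectively to the $\Gal(K/\Q)$-orbits of irreducible factors of $g(x)-\zeta$ over $K$. Precisely, if $\alpha$ is a root of $g(x)-\zeta$, then $F_{n,k}$ is irreducible iff $[\Q(\alpha):\Q]=k\varphi(n)$, iff $g(x)-\zeta$ is irreducible over $K$. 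The number of $\Q$-factors equals the number of $K$-factors of $g(x)-\zeta$, because $\Gal(K/\Q)$ acts transitively on the conjugates $\sigma(\zeta)$. So the task becomes: determine exactly when $g(x)-\zeta$ factors over $K$, and show it then has exactly two factors.

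\textbf{Step 1 (the reducible cases).} Rewrite $g(x)-\zeta=0$ as $x^{k+1}-\zeta x+(\zeta-1)=0$, a trinomial, after multiplying by $x-1$ and removing the spurious root $x=1$. For $n\mid k+2$ with $k$ even, I would exhibit an explicit root: try $x=\zeta^{j}$ or $x=-\zeta^{j}$. For instance, $x=\zeta^{-1}$ gives $\zeta^{-(k+1)}-1+\zeta-1$, and with $\zeta^{k+2}=1$ this equals $2\zeta-2$; this particular choice fails, but some root of unity in $K$ should work, such as $-\zeta^{(k+2)/2}$-type choices or a $2n$-th root when $k$ is odd. Once a root in $K$ is found, $g(x)-\zeta$ has a linear factor over $K$, so $F_{n,k}$ has the factor $\prod_\sigma(x-\sigma(\text{root}))$ of degree $\varphi(n)$. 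In the odd case, $n$ even and $n\mid 2(k+2)$ should make the root a primitive $n$-th root of unity, up to sign.

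\textbf{Step 2 (irreducibility otherwise, and exactly two factors).} I would apply a Capelli/Ljunggren-type analysis to trinomials $x^{m}-ax+b$ over the cyclotomic field. Using Newton polygons at primes of $K$ above divisors of $n$ is natural. If $n=p^e$, then $\zeta-1$ is a uniformizer at the prime above $p$, so $x^{k+1}-\zeta x+(\zeta-1)$ has a Newton polygon that controls factor degrees. For composite $n$, $\zeta-1$ is a unit, so instead I would look at primes dividing the discriminant, or reduce modulo primes $\mathfrak p$ of $K$ and compare factorization patterns from several primes (Dedekind/Frobenius cycle types) to force transitivity of the Galois group of $g(x)-\zeta$ on its roots, except for one fixed root. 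A cleaner route may be the Ljunggren method: if $f=x^{k+1}-\zeta x+(\zeta-1)$ factors as $f=uv$, compare $f(x)\,x^{k+1}\bar f(1/x)$ with the same expression for the reciprocal-conjugate factors, using that $|\zeta|=1$. This forces any nontrivial factor to be built from roots lying on the unit circle, i.e.\ roots of unity, which is exactly the situation of Step 1. With that, factors other than $x-1$ and the root-of-unity factor are excluded, which gives irreducibility when there is no such root and exactly two factors otherwise.

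\textbf{Main obstacle.} Step 2 is the hard part. The Ljunggren trick works cleanly over $\Q$ with real coefficients, but over $K$ one must use complex conjugation, which commutes with $\Gal(K/\Q)$ because $K$ is CM. The argument must then show that every common root of $f$ and its reciprocal-conjugate is a root of unity satisfying the divisibility conditions. I would first solve $f(\beta)=0=\beta^{k+1}\bar f(1/\beta)$ simultaneously: eliminating $\beta^{k+1}$ gives a linear/quadratic relation in $\beta$ forcing $|\beta|=1$, and then a direct argument pins down $\beta$ and yields $n\mid k+2$ or $n\mid 2(k+2)$. The remaining difficulty is proving that the cofactor has no further splitting. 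Here the Ljunggren identity must be exploited carefully via a norm/coefficient comparison: the sum of squared absolute values of coefficients, taken under all embeddings, stays fixed.
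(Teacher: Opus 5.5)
Your framework matches the paper's: reduce to the factorization over $\K$ of $q(x)=s_k(x)-\zeta_n$ (your $g(x)-\zeta$), compute $h=\gcd(q,q^{\#})$, and compare with the conjugate reciprocal in Ljunggren style. But the proposal stops where the proof begins. Step 2, the irreducibility over $\K$ of $r=q/h$, is the whole content of the theorem: it gives both the ``only if'' direction and ``precisely two factors''. You do not prove it. Your claim that Ljunggren's method ``forces any nontrivial factor to be built from roots on the unit circle'' is not what the method yields, and it is not justified. The invariance of $\sum_i\Tr_{\K/\Q}(p_i\ol{p_i})$ that you mention also does nothing on its own. What is needed is the following:
\begin{enumerate}[$(i)$]
\item If $r=uv$, set $P=(x-1)h\,u\,v^{\#}$ and $Q=(x-1)q=x^{k+1}-\zeta_n x+\zeta_n-1$ (your $f$). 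Then $PP^{\#}=QQ^{\#}$, so $A_\ell(P)=A_\ell(Q)$ for every $\ell$.
\item Combine $\sum_i\Tr_{\K/\Q}(p_i\ol{p_i})=4\varphi(n)-2\mu(n)$ with the lower bound $\Tr_{\K/\Q}(\beta\ol{\beta})\ge\varphi(n)$ for every non-zero $\beta\in\mathcal{O}_{\K}$ (AM--GM applied to the norm). This gives $|\supp(P)|\le 4$.
\item Use $A_\ell(P)=0$ for $2\le\ell\le k-1$, together with $A_k(P)\neq0\neq A_{k+1}(P)$, to force $\supp(P)=\{0,1,k+1\}$ or $\{0,k,k+1\}$.
\item Then $P(1)=0$ gives $P=\lambda Q$ or $P=\lambda Q^{\#}$ with $\lambda\ol{\lambda}=1$. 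Hence $v^{\#}=\lambda v$ (or $u^{\#}=\lambda u$), so $v$ (or $u$) divides $h$, contradicting $\gcd(r,h)=1$.
\end{enumerate}
The outcome is that one factor is self-reciprocal up to a unit, not a statement about where the roots lie.

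This generic count also fails in three boundary cases, each needing its own argument. For $n=3$ one has $4\varphi(n)-2\mu(n)=5\varphi(n)$, so the trace bound only gives $|\supp(P)|\le 5$; one must instead use $p_{k+1}\ol{p_0}=\zeta_n^{-1}-1$ and norms in the imaginary quadratic field. For $k=3$ the support $\{0,1,2,4\}$ passes the combinatorial test, since $A_2(P)=p_2\ol{p_0}+p_4\ol{p_2}$ can vanish by cancellation, and it must be excluded by explicit computation. For $k=2$, $P$ has only four coefficients, so support counting says nothing. There one needs the sharper estimate $|\sigma(p_0)|^2+|\sigma(p_3)|^2\ge 2|\sigma(\zeta_n)-1|>|\sigma(\zeta_n)-1|^2$, summed over all embeddings $\sigma$. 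Your Newton-polygon and Frobenius alternatives are not developed, and for composite $n$, where $\zeta_n-1$ is a unit, you offer no mechanism at all.

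Step 1 as written also has a hole, since none of your guessed roots works. Your own elimination repairs it. Eliminating $\beta^{k+1}$ and $\beta^{k}$ between $Q(\beta)=0$ and $Q^{\#}(\beta)=0$ gives $(\zeta_n\beta+1)(\beta-1)=0$. So the only relevant common root is $\beta=-\zeta_n^{-1}$, which is a root of $q$ if and only if $(-\zeta_n^{-1})^{k+2}=1$, i.e.\ exactly in the cases of the theorem. You would still need to check that it is a simple root of $q$, so that $h=x+\zeta_n^{-1}$.
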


The non-existence of almost Moore digraphs is immediate using Theorem \ref{thm:cyclotomic} and Theorem \ref{thm:conditional_non_existence}. 

\begin{theorem}\label{thm:no_almost_Moore_digraph} The $(d,k)$-digraphs do not exist for any $d>1$ and $k>2$. 
\end{theorem}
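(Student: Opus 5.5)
The statement to prove is Theorem \ref{thm:no_almost_Moore_digraph}, and it follows by combining Theorem \ref{thm:cyclotomic} with Theorem \ref{thm:conditional_non_existence}. Fix $d>1$ and $k>2$. Theorem \ref{thm:conditional_non_existence} requires one hypothesis: the cyclotomic conjecture (Conjecture \ref{conj:cyclotomic}) must hold for every pair $(n,k)$ with $2<n\le d+\cdots+d^k$.

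Theorem \ref{thm:cyclotomic} establishes Conjecture \ref{conj:cyclotomic} for all $n>2$ and $k>1$. In particular it covers the present $k>2$ and every $n$ in the range $2<n\le d+\cdots+d^k$. So the hypothesis of Theorem \ref{thm:conditional_non_existence} is satisfied, and that theorem gives that no $(d,k)$-digraph exists.

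Since $d$ and $k$ were arbitrary, this holds for all $d>1$ and $k>2$. The reduction itself involves no obstacle. All the difficulty lies in Theorem \ref{thm:cyclotomic}, which we assume here.
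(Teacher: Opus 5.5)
Your proposal is correct and matches the paper's argument. The paper likewise obtains Theorem~\ref{thm:no_almost_Moore_digraph} immediately from Theorem~\ref{thm:cyclotomic} and Theorem~\ref{thm:conditional_non_existence}: the hypothesis required for $2<n\le d+\cdots+d^k$ is covered by the unconditional result for all $n>2$, $k>1$.
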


In other words, for all $G\in \mathcal{D}(d,k)$, we have 
\[ |V(G)|\le M_{d,k}-2,\]
whenever $d>1$ and $k>2$. The next natural question is whether there exist digraphs in $\mathcal{D}(d,k)$ of order $M_{d,k}-2$; see \cite{Miller_Siran_2005} for some progress on this problem. 

The remainder of the article is devoted to the proof of Theorem \ref{thm:cyclotomic}. As noted earlier, Theorem \ref{thm:cyclotomic} is known to be true for $k=2, 3, 4$ in the literature, but their methods do not generalize as they use specific properties and ad-hoc techniques that only seem to work for small $k$. For completeness, we will give a proof for all $k>1$.

\section{Proof of the cyclotomic conjecture}
\label{sec:cyclotomic_proof}

\subsection{Notational setup}
Throughout, assume that $n>2$ and $k>1$. Define 
\[ s_k(x) := 1 + x + \cdots + x^k.\]
Let $\zeta_n$ denote a primitive $n$-th root of unity. Let 
\[\K := \Q(\zeta_n)\] 
and $\mathcal{O}_{\K}$ denote its ring of integers. Clearly, 
\begin{equation}\label{eq:F_n_k_expansion}
  F_{n,k}(x) = \Phi_n(s_k(x)) = \prod_{\sigma\in \Gal(\K/\Q)}\left(s_k(x)-\sigma(\zeta_n)\right),  
\end{equation}
where the product is over distinct primitive $n$-th roots of unity. The degree of $F_{n,k}(x)$ is $k\varphi(n)$, where $\varphi$ is Euler's totient function. 

If a polynomial $f(x) = \sum_{i=0}^r f_i\, x^i\in \K[x]$ has degree $r$ and the constant term $f_0\neq 0$, then define the \emph{conjugate reciprocal} of $f$ to be the polynomial
\[f^{\#}(x) := x^r\, \ol{f}(x^{-1}) = \sum_{i=0}^r \ol{f_i}\,x^{r-i},\]
where $\ol{f}$ (resp. $\ol{f_i}$) denotes the conjugation in $\K[x]$ (resp. $\K$). If $f$ and $g$ are polynomials in $\K[x]$ with non-zero constant terms, then clearly
\begin{equation}
    (fg)^{\#} = f^{\#}g^{\#} \quad \text{and}\quad (f^{\#})^{\#} = f.
\end{equation}
We define the \emph{support} of $f$ to be the set of indices of its non-zero coefficients, i.e.,
\[ \supp(f):=\{0\le i\le r: f_i\neq 0\}.\]

For a polynomial $f(x) = \sum_{i=0}^{k+1} f_i\ x^i\in \K[x]$ of degree $k+1$, define its $\ell$-th \emph{autocorrelation coefficient} by 
\[A_\ell(f):=\sum_{j=0}^{k+1-\ell} f_{j+\ell}\ol{f_j}\qquad (0\le \ell \le k+1).\]
If $f^{\#}$ is well-defined, then $A_\ell(f)$ is indeed the coefficient of $x^{k+1+\ell}$ in $ff^{\#}$. Moreover, 
\[ A_\ell(f) = A_\ell(f^{\#}) \quad (0\le \ell\le k+1).\]
We will denote by $\mu(n)$ the M\"{o}bius function. The \emph{trace} and \emph{norm} of an element $\beta\in \K$ over $\Q$ will be denoted by $\Tr_{\K/\Q}(\beta)$ and $\N_{\K/\Q}(\beta)$, respectively. We will use standard notation, terminology and results from Field Theory; refer to \cite{Ribenboim_2001, Dummit_Foote_2004} for details.

The following inequality is a standard application of the AM-GM inequality (see \cite[Ch. 5, exercise 27]{Ribenboim_2001}).

\begin{lemma}\label{lemma:trace_inequality}
For every non-zero $\beta\in \mathcal{O}_{\K}$, 
    \[ \Tr_{\K/\Q}(\beta \ol{\beta})\ge \varphi(n).\] 
\end{lemma}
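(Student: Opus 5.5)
We prove Lemma~\ref{lemma:trace_inequality} by writing the trace as a sum over the Galois conjugates and then applying the AM--GM inequality, with integrality of the norm supplying the lower bound.

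Since $\K=\Q(\zeta_n)$ is an abelian (hence Galois) extension of $\Q$ of degree $\varphi(n)$, and complex conjugation restricted to $\K$ is the automorphism $\zeta_n\mapsto\zeta_n^{-1}$, which commutes with every $\sigma\in\Gal(\K/\Q)$, we get $\sigma(\ol{\beta})=\ol{\sigma(\beta)}$. Therefore
\[
\Tr_{\K/\Q}(\beta\ol{\beta})=\sum_{\sigma\in\Gal(\K/\Q)}\sigma(\beta)\ol{\sigma(\beta)}=\sum_{\sigma}|\sigma(\beta)|^2 .
\]
This is a sum of $\varphi(n)$ non-negative reals, and all terms are strictly positive because $\beta\neq 0$ and each $\sigma$ is injective.

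By the AM--GM inequality,
\[
\frac{1}{\varphi(n)}\sum_{\sigma}|\sigma(\beta)|^2\;\ge\;\Bigl(\prod_{\sigma}|\sigma(\beta)|^2\Bigr)^{1/\varphi(n)}=\bigl|\N_{\K/\Q}(\beta)\bigr|^{2/\varphi(n)} .
\]
Because $\beta\in\mathcal{O}_{\K}$ is non-zero, its norm $\N_{\K/\Q}(\beta)$ is a non-zero rational integer, so $|\N_{\K/\Q}(\beta)|\ge 1$. The right-hand side is therefore at least $1$, and multiplying by $\varphi(n)$ gives $\Tr_{\K/\Q}(\beta\ol{\beta})\ge\varphi(n)$.

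The only point needing care is the commutation of complex conjugation with the Galois action, which is what makes $\sigma(\beta\ol\beta)=|\sigma(\beta)|^2$ and hence makes every summand a positive real. This is where the abelian (CM) structure of cyclotomic fields is used. The rest of the argument is routine.
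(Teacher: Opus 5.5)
Your proof is correct and is the standard AM--GM argument the paper has in mind; the paper does not prove the lemma itself but cites it as an exercise in Ribenboim. You write $\Tr_{\K/\Q}(\beta\ol{\beta})=\sum_\sigma|\sigma(\beta)|^2$, which uses that complex conjugation commutes with the abelian Galois group, and then bound it below by $\varphi(n)\,|\N_{\K/\Q}(\beta)|^{2/\varphi(n)}\ge\varphi(n)$ using integrality of the norm.
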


\subsection{Polynomials $q(x)$ and $r(x)$}
Define
\begin{equation}
     q(x) := s_k(x) - \zeta_n  \in \mathcal{O}_{\K}[x].
\end{equation}
The degree of $q$ is $k$. Note here that $q(0) = 1 - \zeta_n\neq  0$, and so $q^{\#}$ is well-defined. Indeed
\begin{equation}
  \q(x) = s_k(x) - \zeta_n^{-1}x^k.   
\end{equation}
Equation \eqref{eq:F_n_k_expansion} suggests that the irreducibility of $F_{n,k}(x)$ over $\Q[x]$ is closely related to the irreducibility of $q(x)$ over $\K[x]$. We first analyze the polynomial $q(x)$, characterize precisely when $q(x)$ is irreducible, and then use it to prove the cyclotomic conjecture. 

We first determine the greatest common divisor of $q$ and $\q$ in $\K[x]$. 

\begin{lemma}\label{lemma:gcd} For $n> 2$ and $k>1$, we have
\[ \gcd(q, \q) = 
\begin{cases}
    1 & \text{ if }\, (-\zeta_n^{-1})^{k+2} \neq 1;\\
    x + \zeta_n^{-1} & \text{ if }\, (-\zeta_n^{-1})^{k+2} = 1.\\
\end{cases}\]
In the latter case, $-\zeta_n^{-1}$ is a simple root of $q$. 
\end{lemma}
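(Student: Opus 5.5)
We compute the common roots of $q$ and $\q$ directly. Let $\alpha$ be a common root in an algebraic closure. Since $q(0)=1-\zeta_n\neq 0$, we have $\alpha\neq 0$. Subtracting $\q(x)=s_k(x)-\zeta_n^{-1}x^k$ from $q(x)=s_k(x)-\zeta_n$ gives $\zeta_n^{-1}\alpha^k-\zeta_n=0$, that is,
\[
\alpha^k=\zeta_n^2 .
\]
To get a second relation we use $(x-1)s_k(x)=x^{k+1}-1$. The equation $q(\alpha)=0$ says $s_k(\alpha)=\zeta_n$. Note that $\alpha\neq 1$, because $s_k(1)=k+1\neq\zeta_n$. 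Multiplying by $\alpha-1$ gives
\[
\alpha^{k+1}-1=\zeta_n(\alpha-1).
\]
Substituting $\alpha^{k+1}=\zeta_n^2\alpha$ yields $\zeta_n^2\alpha-1=\zeta_n\alpha-\zeta_n$, so $\alpha\zeta_n(\zeta_n-1)=1-\zeta_n$. Since $\zeta_n\neq 1$, this forces $\alpha=-\zeta_n^{-1}$. Hence the only possible common root is $-\zeta_n^{-1}$.

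Conversely, we check when $\alpha=-\zeta_n^{-1}$ really is a common root. The relation $\alpha^k=\zeta_n^2$ becomes $(-\zeta_n^{-1})^k=\zeta_n^2$. Multiplying by $\zeta_n^{-2}=(-\zeta_n^{-1})^2$, this is exactly $(-\zeta_n^{-1})^{k+2}=1$. Under this condition, the identity $\alpha^{k+1}-1=\zeta_n(\alpha-1)$ also holds, since both sides equal $-\zeta_n^{-1}-1$. As $\alpha\neq1$, this gives $s_k(\alpha)=\zeta_n$, so $q(\alpha)=0$. Then $\q(\alpha)=q(\alpha)+(\zeta_n-\zeta_n^{-1}\alpha^k)=0$ as well. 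So the common roots are empty when $(-\zeta_n^{-1})^{k+2}\neq1$, and equal to $\{-\zeta_n^{-1}\}$ otherwise.

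It remains to show that, in the latter case, $-\zeta_n^{-1}$ is a simple root of $q$ (and of $\q$). Then the monic gcd is exactly $x+\zeta_n^{-1}$, and this is also the simple-root claim in the statement. Consider $g(x)=(x-1)q(x)=x^{k+1}-1-\zeta_n(x-1)$. Since $\alpha\neq1$, $\alpha$ is a multiple root of $q$ if and only if it is one of $g$, which requires $g'(\alpha)=(k+1)\alpha^k-\zeta_n=0$. With $\alpha^k=\zeta_n^2$ this reads $(k+1)\zeta_n^2=\zeta_n$, i.e. $\zeta_n=1/(k+1)$. That is impossible since $|\zeta_n|=1$ and $k+1>1$.

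For $\q$ we use $\q=x^k\ol{q}(x^{-1})$. The roots of $\q$ are the inverses of the roots of $\ol q$, with the same multiplicities. The conjugate of our setup, with $\zeta_n$ replaced by $\zeta_n^{-1}$, shows that $-\zeta_n$ is a simple root of $\ol q$. Hence $-\zeta_n^{-1}$ is a simple root of $\q$. This multiplicity check is the only step needing care; the main computation is the elimination above, which is routine.
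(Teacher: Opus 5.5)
Your proof is correct, and for the central step it takes a different, more elementary route than the paper. Both arguments begin with $\alpha^k=\zeta_n^2$. The paper then observes that $\alpha$ is a root of unity and writes $\alpha=e^{i\theta}$. It uses $|s_k(\alpha)|=1$ together with $|e^{ix}-1|=|2\sin(x/2)|$ to get $\sin^2((k+1)\theta/2)=\sin^2(\theta/2)$, hence $\alpha^k=1$ or $\alpha^{k+2}=1$. It excludes the first case via $n>2$ and solves the second to get $\alpha=-\zeta_n^{-1}$.

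You instead substitute $\alpha^{k+1}=\zeta_n^2\alpha$ into $(\alpha-1)s_k(\alpha)=\alpha^{k+1}-1$. This forces $\alpha=-\zeta_n^{-1}$ in one line using only $\zeta_n\neq 1$, with no analytic input and no case split. The condition $(-\zeta_n^{-1})^{k+2}=1$ then drops out of $\alpha^k=\zeta_n^2$. The paper's route also shows that common roots lie on the unit circle, but that is not needed here, so your elimination is the cleaner argument.

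Your simplicity check is essentially the paper's. Both reduce to $\zeta_n\neq 1/(k+1)$: you differentiate $(x-1)q(x)$, while the paper evaluates $s_k'$ through the quotient rule.

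Two minor points. First, checking simplicity for $q^{\#}$ as well is unnecessary, since the multiplicity of a common root in the gcd is the minimum of its multiplicities in $q$ and $q^{\#}$. Second, in the converse both sides of $\alpha^{k+1}-1=\zeta_n(\alpha-1)$ equal $-1-\zeta_n$, not $-\zeta_n^{-1}-1$. This is a harmless slip, since the two sides do agree.
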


\begin{proof}
Suppose that $\alpha$ is a common root of $q$ and $\q$. Then, 
    \[s_k(\alpha) =\zeta_n \quad \text{and}\quad  s_k(\alpha) = \zeta_n^{-1}\alpha^k.\]
Thus, $\alpha^k = \zeta_n^2$, implying $\alpha$ is a root of unity. Also, $\alpha\neq 1$ since $s_k(1)= k +1 \neq \zeta_n$. Write $\alpha =  e^{i\theta}$. See that 
\[ 1 = |\zeta_n|  = |s_k(\alpha)|= \left|\frac{\alpha^{k+1}-1}{\alpha-1}\right| = \left|\frac{\sin((k+1)\theta/2)}{\sin(\theta/2)}\right|,\]
where the last equality follows by the fact that $|e^{ix} - 1| = |2\sin\left(\frac{x}{2}\right)|$. 
It follows that  
\[ \sin^2\left(\frac{(k+1)\theta}{2}\right) = \sin^2\left(\frac{\theta}{2}\right),\]
and therefore either $k\theta \equiv 0\mod 2\pi$ or $(k+2)\theta \equiv 0 \mod 2\pi$. Equivalently, 
\[ \alpha^k = 1 \quad \text{or}\quad \alpha^{k+2}=1. \]
If $\alpha^k=1$, then $\zeta_n^2 =1$, which is a contradiction since $n>2$.

If $\alpha^{k+2}=1$, then 
\[\zeta_n  = s_k(\alpha) = \frac{\alpha^{k+1}-1}{\alpha-1} = \frac{\alpha^{-1}-1}{\alpha-1} = -\alpha^{-1},\]
implying $\alpha = -\zeta_n^{-1}$.

Conversely, it is easy to see by the above calculation that if we take $\alpha = -\zeta_n^{-1}$ and if $(-\zeta_n^{-1})^{k+2}=1$, then $\alpha$ is a root of $q$ and $\q$. 

To complete the proof, it suffices to show that if $(-\zeta_n^{-1})^{k+2}=1$, then $-\zeta_n^{-1}$ is not a double root of $q$. For $x\neq 1$, see that the derivative of $s_k(x)$ is given by 
\[ s_k'(x) = \frac{(k+1)x^k(x-1) - (x^{k+1}-1)}{(x-1)^2}.\]
Thus, 
\[s_k'(-\zeta_n^{-1}) = \frac{(-\zeta_n^{-1}+k+1)}{(-\zeta_n^{-1})^2(-\zeta_n^{-1}-1)},\]
using the fact that $(-\zeta_n^{-1})^{k+2}=1$. Clearly, $s'_k(-\zeta_n^
{-1})\neq 0$. Since the derivative of $q(x)$ is $q'(x) = s_k'(x)$, we have that $q'(-\zeta_n^{-1})\neq 0$, implying $-\zeta_n^{-1}$ is a simple root of $q$. This completes the proof. 
\end{proof}

The above lemma depends on whether $(-\zeta_n^{-1})^{k+2}$ equals to $1$ or not. It is easy to check for which values of $n$ and $k$ it equals $1$. 

\begin{lemma}\label{lemma:n_k_possibilities}
The equality $(-\zeta_n^{-1})^{k+2}=1$ holds if and only if one of the following holds:
\begin{enumerate}[$(i)$]
    \item $k$ is even and $n|(k+2)$;
    \item $k$ is odd, $n$ is even and $n|2(k+2)$. 
\end{enumerate}
\end{lemma}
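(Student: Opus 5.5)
The plan is to rewrite the condition $(-\zeta_n^{-1})^{k+2}=1$ as a statement about the order of a root of unity, and then split into cases according to the parity of $k$ and of $n$. Since $(-\zeta_n^{-1})^{k+2}=1$ holds if and only if $(-\zeta_n)^{k+2}=1$ (take inverses), it suffices to determine the multiplicative order $m$ of $-\zeta_n$. The condition is then equivalent to $m\mid(k+2)$. The element $-\zeta_n = e^{i\pi}e^{2i\pi/n}=e^{2i\pi (n+2)/(2n)}$, so its order is $2n/\gcd(n+2,2n)$. More concretely, I will use the standard description of this order.
\begin{itemize}
\item If $n$ is odd, then $-\zeta_n$ is a primitive $2n$-th root of unity, so $m=2n$.
\item If $n\equiv 2 \pmod 4$, then $-\zeta_n$ is a primitive $(n/2)$-th root of unity, so $m=n/2$.
\item If $4\mid n$, then $-\zeta_n=\zeta_n^{1+n/2}$ with $1+n/2$ odd and hence coprime to $n$, so $m=n$.
\end{itemize}
I will verify each item quickly. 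For $n$ odd, $-1$ and $\zeta_n$ have coprime orders $2$ and $n$. For $n=2n'$ with $n'$ odd, $-\zeta_n=\zeta_n^{1+n'}$ and $\gcd(1+n',2n')=2$.

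With this in hand, I will go through the cases.

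\emph{Case $n$ odd.} The condition reads $2n\mid(k+2)$. This forces $k$ even, and for $n$ odd, $2n\mid(k+2)$ is equivalent to ($k$ even and $n\mid k+2$). So this case matches (i) restricted to odd $n$, and it is incompatible with (ii), since (ii) requires $n$ even.

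\emph{Case $4\mid n$.} The condition reads $n\mid(k+2)$, which forces $k$ even. Here I must check that it matches the union of (i) and (ii). For $k$ even, (i) is exactly $n\mid k+2$. For $k$ odd, (ii) would require $n\mid 2(k+2)$ with $k+2$ odd, which is impossible when $4\mid n$. So the two descriptions agree.

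\emph{Case $n\equiv 2\pmod 4$.} The condition reads $(n/2)\mid(k+2)$, equivalently $n\mid 2(k+2)$. I will split on the parity of $k$.
\begin{itemize}
\item If $k$ is odd, this is exactly (ii).
\item If $k$ is even, then $k+2$ is even. Since $n/2$ is odd, $(n/2)\mid(k+2)$ together with $2\mid(k+2)$ gives $n\mid(k+2)$, which is (i). Conversely, $n\mid k+2$ clearly gives $(n/2)\mid(k+2)$.
\end{itemize}

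Combining the three cases gives the stated equivalence. The only step requiring care is this bookkeeping: checking in the case $4\mid n$ that (ii) cannot occur, and in the case $n\equiv 2\pmod 4$ with $k$ even that the conditions $n\mid k+2$ and $n\mid 2(k+2)$ coincide. Everything else is routine.
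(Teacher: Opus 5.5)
Your proof is correct and complete. The paper gives no proof of this lemma; it only remarks that the condition is easy to check. Your argument supplies that check in the natural way: you compute the order of $-\zeta_n$ as $2n$, $n/2$ or $n$ according as $n$ is odd, $n\equiv 2 \pmod 4$ or $4\mid n$, and then match the resulting divisibility condition against (i) and (ii) by the parity of $k$.
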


Let 
\[g:=\gcd(q, \q) \quad \text{and}\quad r := \frac{q}{g}.\]
Next, we will show the following.

\begin{theorem}\label{thm:r_x_irreducible} For all $n>2$ and $k>1$, the polynomial $r(x)$ is irreducible over $\K$.
\end{theorem}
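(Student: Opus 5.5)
The plan is a Ljunggren-type argument, with $\K$ playing the role of $\Q$ and the conjugate reciprocal playing the role of the reciprocal. Suppose for contradiction that $r=ab$ with $a,b\in\K[x]$ of positive degree. Since $q$ is monic with coefficients in $\mathcal{O}_{\K}$, so are $g$ and $r$, and Gauss's lemma over the integrally closed domain $\mathcal{O}_{\K}$ lets us take $a,b$ monic in $\mathcal{O}_{\K}[x]$. Then $a(0),b(0)\neq 0$ because $q(0)=1-\zeta_n\neq 0$, so the conjugate reciprocals are defined. To make $q$ sparse, pass to
\[ Q(x):=(x-1)q(x) = x^{k+1}-\zeta_n x+(\zeta_n-1),\]
which has degree $k+1$ and only three nonzero coefficients. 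Define the competitor
\[ f:=(x-1)\,a\,b^{\#}\,g\in\mathcal{O}_{\K}[x].\]
It has degree $k+1$ and nonzero constant term. Moreover $(x-1)^{\#}=-(x-1)$, so
\[ ff^{\#}=-(x-1)^2\,aa^{\#}bb^{\#}gg^{\#}=QQ^{\#}.\]
Hence $A_\ell(f)=A_\ell(Q)$ for all $0\le\ell\le k+1$.

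\emph{Step 1 (counting the support of $f$).} From $A_0(f)=A_0(Q)=2+|1-\zeta_n|^2$ we get, on taking traces,
\[ \sum_{i}\Tr_{\K/\Q}(f_i\ol{f_i}) = 2\varphi(n)+\Tr_{\K/\Q}(2-\zeta_n-\zeta_n^{-1}) = 4\varphi(n)-2\mu(n).\]
By Lemma \ref{lemma:trace_inequality}, each nonzero coefficient contributes at least $\varphi(n)$. Since $\varphi(n)\ge 2$ for $n>2$, this gives $|\supp(f)|\le 5$. When $|\mu(n)|\le 1$ is used more carefully, it gives $|\supp(f)|\le 4$ unless $\mu(n)=-1$ and $\varphi(n)=2$. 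That exceptional case ($n=3$) will be handled by hand. Also $0,k+1\in\supp(f)$, and $f(1)=0$.

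\emph{Step 2 (pinning down $f$).} Use the remaining autocorrelations. For $Q$, $A_{k+1}(Q)=\zeta_n-1$ and $A_k(Q)=-\ol{\zeta_n}$. The only other nonzero one is $A_1(Q)=-\zeta_n(\ol{\zeta_n}-1)$, and if $k+1>2$ all $A_\ell(Q)$ with $2\le\ell\le k-1$ vanish. Write $f$ with at most four or five terms, exponents $0=e_0<e_1<\dots<e_m=k+1$. The vanishing of $A_\ell$ for the middle range forces the pairwise differences $e_j-e_i$ to avoid $\{2,\dots,k-1\}$, except where cancellations occur. This leaves only a few exponent patterns: essentially $\{0,1,k+1\}$ or $\{0,k,k+1\}$, plus degenerate four-term patterns that I expect to kill using $A_1$ and $A_k$, which are single products. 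Matching the coefficients with $A_{k+1},A_k,A_1$ and with $f(1)=0$ should then give $f=\epsilon Q$ or $f=\epsilon Q^{\#}$ for a unit $\epsilon$ of modulus one. I expect this case analysis, together with the small exceptional cases ($n=3$, and small $k$ where the middle range is short), to be the main obstacle.

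\emph{Step 3 (contradiction).} If $f=\epsilon Q$, then $ab^{\#}g=\epsilon q=\epsilon abg$, so $b^{\#}=\epsilon b$. Then every root of $b$ is a root of $b^{\#}$, and $b$ divides both $r\mid q$ and $r^{\#}\mid q^{\#}$. So $b\mid\gcd(q,\q)=g$, and hence $b^2\mid q$. By Lemma \ref{lemma:gcd}, $g$ is $1$ or a simple root factor $x+\zeta_n^{-1}$. The first is impossible since $\deg b>0$, and the second contradicts simplicity. If $f=\epsilon Q^{\#}$, then similarly $ab^{\#}g=-\epsilon\q$ up to the sign from $(x-1)^{\#}$. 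Since $\q=a^{\#}b^{\#}g^{\#}$, this gives $a=\epsilon' a^{\#}$, and the same argument applied to $a$ yields the contradiction. Therefore $r$ is irreducible over $\K$.
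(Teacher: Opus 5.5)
Your framework is the paper's, and Steps 1 and 3 are sound: the competitor $(x-1)ab^{\#}g$, the identity $ff^{\#}=QQ^{\#}$, the trace count, and the gcd contradiction. (One slip: $A_{k+1}(Q)=\zeta_n^{-1}-1$, not $\zeta_n-1$.) The gap is Step 2. This is where the theorem is actually decided, and you leave it as an expectation. Moreover, the tool you propose for it, exponent differences avoiding $\{2,\dots,k-1\}$, cannot settle the small cases.

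For $k=2$ that range is empty, and Step 1 permits the full support $\{0,1,2,3\}$. The identities $A_\ell(f)=A_\ell(Q)$ and $f(1)=0$ cannot exclude it. Over $\C$ the quadratic $q$ does split, and $(x-1)ab^{\#}$ built from its complex linear factors satisfies all of them without being a unit multiple of $Q$ or $Q^{\#}$ (for $n\neq 4$ no root of $q$ lies on the unit circle). So a sharper arithmetic input is needed, and your plan has none. The paper uses $f_3\ol{f_0}=\zeta_n^{-1}-1$ and AM--GM in every embedding $\sigma$: since $|\sigma(\zeta_n)-1|<2$, we get $|\sigma(f_0)|^2+|\sigma(f_3)|^2\ge 2|\sigma(\zeta_n)-1|>|\sigma(\zeta_n)-1|^2$. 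Summing gives $\Tr_{\K/\Q}(f_0\ol{f_0})+\Tr_{\K/\Q}(f_3\ol{f_3})>2\varphi(n)-2\mu(n)$. Adding $\Tr_{\K/\Q}(f_i\ol{f_i})\ge\varphi(n)$ for $i=1,2$ overshoots the total $4\varphi(n)-2\mu(n)$. Hence $f$ has only three terms, and those cases do give $\epsilon Q$ or $\epsilon Q^{\#}$.

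The other deferred cases also need real arguments. For $n=3$ your bound allows five terms. For $k=3$ that is the full support, which the identities alone again cannot exclude (same complex-factor reason), so ``by hand'' is not a formality. The paper uses that $\K$ is imaginary quadratic when $\varphi(n)=2$. Then $\N_{\K/\Q}(f_0)\N_{\K/\Q}(f_{k+1})=\N_{\K/\Q}(\zeta_n-1)\in\{1,2,3\}$, which is $1$ or prime, so $\N_{\K/\Q}(f_0)+\N_{\K/\Q}(f_{k+1})=\N_{\K/\Q}(\zeta_n-1)+1$. Meanwhile $A_0$ gives $\sum_i\N_{\K/\Q}(f_i)=\N_{\K/\Q}(\zeta_n-1)+2$, leaving room for at most one middle coefficient.

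For $k=3$, the four-term support $\{0,1,2,4\}$ (and its reciprocal $\{0,2,3,4\}$) survives the difference test, because $A_2(f)=f_2\ol{f_0}+f_4\ol{f_2}$ can cancel. Your remark that $A_1$ and $A_k$ are single products is true of $Q$, not of $f$; here $A_1(f)=f_1\ol{f_0}+f_2\ol{f_1}$. The paper solves $A_4$, $A_3$ and $f(1)=0$ in terms of $\alpha=f_0/(\zeta_n-1)$. This turns $A_2(f)=0$ into $(\alpha\ol{\alpha}-1)\bigl(\zeta_n^{-1}-1+(\alpha\ol{\alpha})^{-1}\bigr)=0$, whose second factor is non-real, forcing $f_2=0$. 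Your difference test works as stated only for $k\ge 4$ with at most four terms, using $A_{t-1}$ when the middle exponent $t$ satisfies $2t=k+1$.
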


The irreducibility of $q$ can then be inferred using Lemma \ref{lemma:gcd}. We will first prove the above Theorem \ref{thm:r_x_irreducible} when $k\ge 3$. The case $k=2$ is a boundary case and needs to be dealt with separately.  

\subsection{Irreducibility of $r(x)$ over $\K$ when $k\ge 3$}
\label{subsec:k_atleast_3}

In this subsection, we show the following.

\begin{lemma}\label{lemma:k_atleast_3}
    If $k\ge 3$, then $r(x)$ is irreducible over $\K$.
\end{lemma}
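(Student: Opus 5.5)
The plan is a Ljunggren-type argument: compare autocorrelation coefficients and use the trace inequality of Lemma \ref{lemma:trace_inequality}.

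\textbf{Setting up $w$.} Since $q$ is monic with coefficients in $\mathcal{O}_{\K}$, Gauss's lemma over the integrally closed domain $\mathcal{O}_{\K}$ lets us take every monic factor of $q$ in $\mathcal{O}_{\K}[x]$; in particular $g\in\mathcal{O}_{\K}[x]$ by Lemma \ref{lemma:gcd}. Suppose $r=r_1r_2$ with $r_1,r_2\in\mathcal{O}_{\K}[x]$ monic of positive degree. Put
\[w:=g\,r_1\,r_2^{\#}\in \mathcal{O}_{\K}[x].\]
Then $\deg w=k$ and $ww^{\#}=g g^{\#}r_1r_1^{\#}r_2r_2^{\#}=q\q$. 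Hence $w$ has the same autocorrelation coefficients as $q$. These are computed directly from $q=(1-\zeta_n)+x+\cdots+x^k$:
\[A_0=k+2-\zeta_n-\zeta_n^{-1},\qquad A_\ell=k-\ell+1-\zeta_n^{-1}\ (1\le \ell\le k-1),\qquad A_k=1-\zeta_n^{-1}.\]

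\textbf{Pinning down $w$.} Taking traces gives
\[\sum_{i=0}^{k}\Tr_{\K/\Q}(w_i\ol{w_i})=\Tr_{\K/\Q}(A_0)=(k+2)\varphi(n)-2\mu(n).\]
By Lemma \ref{lemma:trace_inequality}, each nonzero $w_i$ contributes at least $\varphi(n)$. Also $w_k\ol{w_0}=A_k\neq0$, so both end coefficients are nonzero. The total excess over $\varphi(n)$ per nonzero coefficient is therefore at most $2\varphi(n)-2\mu(n)$, which is very small. This should force all but one coefficient (essentially the one playing the role of $1-\zeta_n$) to have $\Tr(w_i\ol{w_i})=\varphi(n)$. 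Equality in the AM--GM inequality then makes all conjugates of $|w_i|^2$ equal to $1$, so these $w_i$ are roots of unity in $\K$.

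I then feed this into the equations $A_\ell(w)=A_\ell(q)$, starting from $\ell=k$ and working down through $\ell=k-1,k-2,\dots$. The rational part $k-\ell$ of $A_\ell$ counts how many of the products $w_{j+\ell}\ol{w_j}$ must equal $1$. Comparing trace and norm information should show that, up to multiplication by a root of unity $u$, either $w=uq$ or $w=u\q$.

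\textbf{Main obstacle.} I expect this reconstruction step to be the hardest part. Several issues need care:
\begin{itemize}
\item whether the small excess can be spread over several coefficients rather than concentrated in one;
\item the boundary cases where $\varphi(n)=2$ and $\mu(n)=-1$, namely $n=3$ (the cases $n=4,6$ should be checked too), where the counting slack is largest;
\item the sums of roots of unity that could mimic the values $k-\ell$; the hypothesis $k\ge3$ should supply enough equations to rule these out.
\end{itemize}
What I would try first is to compare $\Tr_{\K/\Q}(A_\ell)$ with $\sum_j\Tr_{\K/\Q}|w_{j+\ell}\ol{w_j}|$ for $\ell=1$ and $\ell=k-1$, and use the fact that a sum of $m$ roots of unity has trace equal to $m\varphi(n)$ only when every term equals $1$.

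\textbf{Concluding.} Suppose $g r_1r_2^{\#}=u\,g r_1 r_2$. Then $r_2^{\#}=u r_2$, so $r_2$ divides $r_2^{\#}$, which divides $\q$. Since $r_2$ also divides $q$, it divides $\gcd(q,\q)=g$. But $r_2$ divides $r=q/g$, which is coprime to $g$: when $g=1$ this is trivial, and when $g=x+\zeta_n^{-1}$ it holds because $-\zeta_n^{-1}$ is a simple root of $q$ by Lemma \ref{lemma:gcd}. Hence $r_2$ is constant, a contradiction. In the other case, $w=u\q=u\,g^{\#}r_1^{\#}r_2^{\#}$ gives $g r_1=u g^{\#}r_1^{\#}$. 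The same argument applied to $r_1$, noting that $g^{\#}$ is a scalar multiple of $g$, shows $r_1$ divides $\gcd(q,\q)$, again a contradiction. Therefore $r$ is irreducible over $\K$.
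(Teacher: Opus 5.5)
\textbf{Verdict: genuine gap.} Your setup ($w=gr_1r_2^{\#}$ with $ww^{\#}=qq^{\#}$) matches the paper's $p=guv^{\#}$, and your concluding paragraph is correct. But the step you call ``Pinning down $w$'' is the whole content of the lemma, and the route you propose for it does not work.

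\textbf{Why counting on $w$ fails.} Applied to $w$ directly, Lemma~\ref{lemma:trace_inequality} is vacuous. Since $\Tr_{\K/\Q}(A_0(q))=(k+2)\varphi(n)-2\mu(n)\ge (k+1)\varphi(n)$, the inequality $|\supp(w)|\,\varphi(n)\le \Tr_{\K/\Q}(A_0(q))$ holds automatically because $|\supp(w)|\le k+1$. So it gives no information about the support.
\begin{itemize}
\item Your bound ``excess at most $2\varphi(n)-2\mu(n)$'' silently assumes that at most one coefficient of $w$ vanishes, and nothing you have proves this. The conditions $A_\ell(w)\neq 0$ for $1\le \ell\le k$ only say that $\supp(w)$ realizes every difference $1,\dots,k$. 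A set of $O(\sqrt{k})$ indices can do that, with correspondingly larger coefficients.
\item Even with full support, the excess $\varphi(n)-2\mu(n)$ need not sit on one coefficient. For $n=5$ the excess is $6$, while the unit $1+\zeta_5$ has $\Tr_{\K/\Q}\bigl((1+\zeta_5)\ol{(1+\zeta_5)}\bigr)=6$, i.e.\ excess $2$. So counting alone allows three coefficients that are not roots of unity.
\item Your plan for $A_\ell$ with $1\le\ell\le k-1$ presupposes that the $w_i$ are already roots of unity, so it never gets started.
\end{itemize}

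\textbf{The missing idea: multiply by $x-1$ before counting.} Set $P=(x-1)w$ and $Q=(x-1)q=x^{k+1}-\zeta_n x+\zeta_n-1$. Then $PP^{\#}=QQ^{\#}$ carries exactly the information of $ww^{\#}=qq^{\#}$, but $Q$ is a trinomial.
\begin{itemize}
\item $\Tr_{\K/\Q}(A_0(Q))=4\varphi(n)-2\mu(n)$ is independent of $k$. Lemma~\ref{lemma:trace_inequality}, applied to the coefficients $w_{i-1}-w_i$ of $P$, then gives $|\supp(P)|\le 4$. When $\varphi(n)=2$, a norm argument using $A_{k+1}$ improves this to $|\supp(P)|\le 3$.
\item Since $A_\ell(Q)=0$ for $2\le\ell\le k-1$ while $A_k(Q),A_{k+1}(Q)\neq 0$, the support is forced to be $\{0,1,k+1\}$ or $\{0,k,k+1\}$. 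For $k=3$, the supports $\{0,1,2,4\}$ and $\{0,2,3,4\}$ need a short direct computation to exclude.
\item Finally, $A_{k+1}$, $A_k$ and $P(1)=0$ give $P=\lambda Q$ or $P=\lambda Q^{\#}$ with $\lambda\ol{\lambda}=1$. Hence $w$ is a unimodular scalar multiple of $q$ or of $q^{\#}$.
\end{itemize}
From there your concluding argument goes through as written.
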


\begin{proof}
Suppose, to the contrary, that 
\[r(x) = u(x)v(x),\]
where $u,v\in \K[x]$ are monic non-constant polynomials. It is clear that $u, v\in \mathcal{O}_{\K}[x]$ where $\mathcal{O}_{\K}$ is the ring of integers of $\K$. 

Let us quickly argue that the constant terms in $u$ and $v$ are non-zero so that $u^{\#}$ and $v^{\#}$ are well-defined. Since $q(0) = 1 - \zeta_n\neq 0$, and $g(0)\neq 0$ by Lemma \ref{lemma:gcd}, we conclude that $u(0)v(0)\neq 0$. 

Clearly, $q = g u v$. Define
\[ p:= guv^{\#}, \quad P:=(x-1)p,\quad Q := (x-1)q = x^{k+1} - \zeta_nx+\zeta_n-1.\]
It is clear that $p, P, Q\in \mathcal{O}_{\K}[x]$, the degree of $p$ is $k$, and $P, Q$ have degree $k+1$. Moreover, $p(0)$, $P(0)$ and $Q(0)$ are all non-zero. In particular, $p^{\#}, P^{\#}$ and $Q^{\#}$ are well-defined. 

Since $p^{\#} = g^{\#}u^{\#}v$, we see that 
\begin{equation}
   pp^{\#} = gg^{\#}uu^{\#}vv^{\#} = qq^{\#}. 
\end{equation}
Since $(x-1)^{\#} = 1-x$, we see that 
\begin{equation}
   PP^{\#} = QQ^{\#}. 
\end{equation}
Thus, $A_\ell(P) = A_\ell(Q)$. One can easily compute $A_\ell(Q)$ since $Q = x^{k+1} - \zeta_nx +\zeta_n-1$. Indeed, 
\begin{equation}\label{eq:autocorrelation_Q}
    A_\ell(P) = A_\ell(Q) = 
    \begin{cases}
    \zeta_n^{-1}-1 & \text{ if }\ell = k+1;\\
    -\zeta_n^{-1} & \text{ if }\ell = k;\\
    0 & \text{ if } 2\le \ell\le k-1 \\
    \zeta_n-1 & \text{ if }\ell = 1;\\
    4 - \zeta_n-\zeta_n^{-1} & \text{ if }\ell = 0.
    \end{cases}
\end{equation}

Write 
\[P(x) = \sum_{i=0}^{k+1} p_i x^i.\] 
In the next claim, we show that $P$ has small support. 

\begin{claim}\label{claim:small_support} The following hold.
\begin{enumerate}[$(i)$]
    \item If $\varphi(n)>2$, then $P$ has at most four non-zero coefficients, i.e., $|\supp(P)|\le 4$.
    \item If $\varphi(n) = 2$, then $P$ has at most three non-zero coefficients, i.e., $|\supp(P)|\le 3$.
\end{enumerate}
\end{claim}

\begin{proof} By \eqref{eq:autocorrelation_Q}, we have 
\[ A_{0}(P) = A_0(Q),\]
which is equivalent to 
\begin{equation}\label{eq:lag_0}
    \sum_{i=0}^{k+1} p_i\ol{p_i} =  4- \zeta_n-\zeta_n^{-1}.
\end{equation}
First suppose that $\varphi(n)>2$. Taking traces on both sides of \eqref{eq:lag_0} and using $\Tr_{\K/\Q}(\zeta_n)=\mu(n)$, we get
\begin{equation}\label{eq:trace_comparison}
   \sum_{i=0}^{k+1} \Tr_{\K/\Q}(p_i\ol{p_i}) = 4 \varphi(n)-2\mu(n). 
\end{equation}
Using \eqref{eq:trace_comparison} and Lemma \ref{lemma:trace_inequality}, we have 
\begin{equation}\label{eq:non_zero_coefficient}
   |\supp(P)| \,\varphi(n) \le 4\varphi(n)-2\mu(n). 
\end{equation}

If $|\supp(P)|\ge 5$, then 
\[ 5\varphi(n)\le 4\varphi(n)-2\mu(n)\le 4\varphi(n)+2 < 5\varphi(n),\]
a contradiction. Thus, $|\supp(P)|\leq 4$ in this case, proving assertion $(i)$.

Now, assume that $\varphi(n) = 2$. Then, $n\in \{3,4,6\}$. In these cases, $\K$ is an imaginary quadratic field and complex conjugation is a non-trivial element of $\Gal(\K/\Q)$. Therefore, for every element $\beta\in \mathcal{O}_{\K}$ the norm
\[\N_{\K/\Q}(\beta) = \beta\ol{\beta}\]
is a non-negative integer. Thus \eqref{eq:lag_0} is equivalent to 
\begin{equation}\label{eq:norm_case_2}
  \sum_{i=0}^{k+1} \N_{\K/\Q}(p_i) = \N_{\K/\Q}(\zeta_n-1) + 2. 
\end{equation}
Now,
\[ A_{k+1}(P)=A_{k+1}(Q)\]
is equivalent to
\[p_{k+1}\ol{p_0} = \zeta_n^{-1}-1 \]
by \eqref{eq:autocorrelation_Q}. Taking norms on both sides,  
\[ \N_{\K/\Q}(p_{k+1})\,\N_{\K/\Q}(p_0) = \N_{\K/\Q}(\zeta_n^{-1}-1).\]
Since $p_0, p_{k+1}, \zeta_n^{-1}-1$ are non-zero elements in $\mathcal{O}_{\K}$, $\N_{\K/\Q}(p_{k+1})$, $\N_{\K/\Q}(p_0)$ and $\N_{\K/\Q}(\zeta_n^{-1}-1)$ are positive integers. Moreover, since $n\in \{3,4,6\}$, $\N_{\K/\Q}(\zeta_n^{-1}-1)\in \{1,2,3\}$. In this case,
\begin{equation}\label{eq:norm_plus_1}
   \N_{\K/\Q}(p_{k+1}) + \N_{\K/\Q}(p_0)\ge \N_{\K/\Q}(\zeta_n^{-1}-1)+1. 
\end{equation}
Using \eqref{eq:norm_case_2} and \eqref{eq:norm_plus_1}, we see that there is at most one index $i\notin \{0,{k+1}\}$ such that $p_i\neq 0$. Therefore, $|\supp(P)|\le 3$, proving assertion $(ii)$. 
\end{proof}

We now determine the exact possibilities for $\supp(P)$. 
\begin{claim}\label{claim:support_exact} We have
    \[ \supp(P) =\{0,1,k+1\}\quad \text{or}\quad \supp(P) = \{0,k, k+1\}.\]
\end{claim}

\begin{proof}
By \eqref{eq:autocorrelation_Q}, 
\[0\neq A_{k+1}(P) = p_{k+1}\ol{p_0}\]
which implies $ 0, k+1\in \supp(P).$ Also, 
\[0\neq A_{k}(P) = p_k\ol{p_0} + p_{k+1}\ol{p_1}\]
which implies $1\in \supp(P)$ or $k\in \supp(P).$ Thus, either $\{0,1,k+1\}$ or $\{0,k,k+1\}$ is a subset of $\supp(P)$. 
Using Claim \ref{claim:small_support}, we have 
\[3\le |\supp(P)|\le 4.\]
If $|\supp(P)|=3$, then we are done. So suppose that $|\supp(P)|=4$. We consider the following cases:

\textbf{Case 1:} $k\ge 4$

Replacing $P$ with $P^{\#}$ if necessary, we can assume that $1\in \supp(P)$. So suppose
\[ \supp(P) = \{0,1,t, k+1\}\quad \text{where}\quad 2\le t\le k.\]
If $t=k$, then see that 
\[ A_{k-1}(P) = p_k\ol{p_1}\neq 0,\]
which contradicts \eqref{eq:autocorrelation_Q}.

If $2\le t\le k-1$ and $k+1\neq 2t$, then  
\[A_t(P) = p_t\ol{p_0}\neq 0,\]
a contradiction to \eqref{eq:autocorrelation_Q}.

If $2t=k+1$, then since $k\ge 4$, we have $t\ge 3$. In this case, 
\[ A_{t-1}(P) = p_t\ol{p_1}\neq 0,\]
again a contradiction to \eqref{eq:autocorrelation_Q}.

\textbf{Case 2:} $k = 3$

We need to rule out the possibilities $\{0,1,2,4\}$, $\{0,1,3,4\}$ and $\{0,2,3,4\}$ for $\supp(P)$. 

If $\supp(P)=\{0,1,3,4\}$, then 
\[A_2(P) = p_3\ol{p_1}\neq 0\]
which contradicts \eqref{eq:autocorrelation_Q}. 

Notice that if $\supp(P) = \{0,1,2,4\}$, then $\supp(P^{\#})=\{0,2,3,4\}$. Thus, to complete the proof, it suffices to rule out $\{0,1,2,4\}$. So write 
\[P(x) = p_0 + p_1x + p_2x^2 +p_4x^4.\]
Let 
\begin{equation}\label{eq:p_0}
   \alpha = \frac{p_0}{(\zeta_n-1)}\neq 0.
\end{equation}
Using \eqref{eq:autocorrelation_Q}, we have
\[    A_4(P)  = p_4\ol{p_0}  = \zeta_n^{-1}-1,\]
which implies
\begin{equation}\label{eq:p_4}
  p_4 = \frac{1}{\ol{\alpha}}.  
\end{equation}
Since
\[A_3(P) = p_4\ol{p_1} = - \zeta_n^{-1},\]
we get 
\begin{equation}\label{eq:p_1}
   p_1 = - \zeta_n \alpha. 
\end{equation}
As $P(1)=0$, we have 
\[ p_0+p_1 + p_2 +p_4 = 0,\]
which implies 
\begin{equation}\label{eq:p_2}
   p_2 = - p_0 - p_1-p_4 = \alpha - \frac{1}{\ol{\alpha}}. 
\end{equation}
Finally, using \eqref{eq:autocorrelation_Q}, 
\begin{equation}\label{eq:A_2}
    A_2(P) = p_2\ol{p_0} + p_4\ol{p_2} = 0.
\end{equation}
Using \eqref{eq:p_0}, \eqref{eq:p_1}, \eqref{eq:p_2} and \eqref{eq:p_4}, the above equation \eqref{eq:A_2} becomes  
\[ \left(\alpha\ol{\alpha}-1 \right)\left(\zeta_n^{-1}-1+\frac{1}{\alpha\ol{\alpha}}\right) = 0.\]
The second factor is non-real and therefore cannot vanish. Hence, $\alpha\ol{\alpha}=1$, which gives $p_2 = 0$ by \eqref{eq:p_2}. This is a contradiction since $2\in \supp(P)$. The proof is complete.
\end{proof}

\begin{claim}\label{claim:form_of_P}
We have 
    \[ P = \lambda Q\quad \text{or} \quad P = \lambda Q^{\#}\]
for some $\lambda\in \K$ such that $\lambda \ol{\lambda}=1$.     
\end{claim}

\begin{proof}
By Claim \ref{claim:support_exact}, we consider the following cases:

\textbf{Case 1:} $\supp(P) =\{0,1,k+1\}$.

Write
\[ P(x) = p_0 + p_1x+ p_{k+1}x^{k+1}.\]
Using \eqref{eq:autocorrelation_Q}, we have 
\[ A_{k+1}(P) = p_{k+1}\ol{p_0}=\zeta_n^{-1}-1\quad \text{ and }\quad A_{k}(P) = p_{k+1}\ol{p_1} = -\zeta_n^{-1}.\]
Thus, 
\begin{equation}\label{eq:p1_p0}
    p_1 = -\frac{\zeta_n}{\zeta_n-1}p_0.
\end{equation}
Since $P(1) =0$, we have $p_0 + p_1 + p_{k+1} = 0$. Using \eqref{eq:p1_p0}, we get 
\[ p_0 -\frac{\zeta_n}{\zeta_n-1}p_0 = -p_{k+1},\]
which implies
\[ p_0 = p_{k+1}(\zeta_n-1).\]
Similarly, 
\[p_1 = -p_{k+1}\zeta_n.\]
Substituting these expressions for $p_0$ and $p_1$ in $P(x)$, we see that 
\[ P(x) = p_{k+1}\left(x^{k+1}-\zeta_nx+\zeta_n-1  \right) = p_{k+1}Q(x).\]
Moreover, since $A_{k+1}(Q) = A_{k+1}(P) = A_{k+1}(p_{k+1}Q)$, we get $p_{k+1}\ol{p_{k+1}} = 1$. This completes the proof in this case. 

\textbf{Case 2:} $\supp(P) = \{0,k,k+1\}$

Then, $\supp(P^{\#})=\{0,1,k+1\}$. Arguing as in Case 1 with $P^{\#}$ gives the desired result.
\end{proof}

We are now ready to finish the proof of the lemma. We consider the following cases by Claim \ref{claim:form_of_P}:

\textbf{Case 1:} $P = \lambda Q$. ~ This means $(x-1)guv^{\#} = \lambda(x-1)guv$, which implies $v^{\#} = \lambda v$. Thus, $v$ divides both $q = guv$ and $q^{\#}=g^{\#}u^{\#}v^{\#}$, implying that $v$ divides their greatest common divisor $g$. This is a contradiction because $v$ is a factor of $r$ and $\gcd(r,g) =1$ by Lemma \ref{lemma:gcd}.  

\textbf{Case 2:} $P = \lambda Q^{\#}$. ~ One can argue that $u$ divides both $q$ and $q^{\#}$, which leads to the same contradiction as in Case 1. 
\end{proof}

\subsection{Irreducibility of $r(x)$ over $\K$ when $k = 2$}

In this case, 
\[ q(x) = 1 + x +x^2 - \zeta_n.\]
Using Lemmas \ref{lemma:gcd} and \ref{lemma:n_k_possibilities}, we see
\[ q(x) = 
\begin{cases}
    r(x) & \text{ if }n\neq 4;\\
    r(x)(x+\zeta_{4}^{-1}) & \text{ if }n=4.
\end{cases}\]

We show the following. 

\begin{lemma}\label{lemma:k_equal_2}
    If $k=2$, then $r(x)$ is irreducible over $\K$.
\end{lemma}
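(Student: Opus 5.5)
The case $n=4$ is immediate. There Lemmas \ref{lemma:gcd} and \ref{lemma:n_k_possibilities} show that $q=r\,(x+\zeta_4^{-1})$. Since $q$ has degree $2$, $r$ is linear and hence irreducible. So assume $n\neq 4$. Then $r=q=x^2+x+1-\zeta_n$. If it were reducible, we would have $q=uv$ with $u=x-a$ and $v=x-b$, where $a,b\in\mathcal{O}_{\K}$ are algebraic integers lying in $\K$, $a+b=-1$ and $ab=1-\zeta_n$. Here $g=1$, so I would rerun the machinery of Subsection \ref{subsec:k_atleast_3} with $k=2$. Set $p=uv^{\#}$, $P=(x-1)p$ and $Q=x^3-\zeta_n x+\zeta_n-1$. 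We still have $PP^{\#}=QQ^{\#}$, so the autocorrelations \eqref{eq:autocorrelation_Q} hold with $k=2$: $A_3=\zeta_n^{-1}-1$, $A_2=-\zeta_n^{-1}$, $A_1=\zeta_n-1$ and $A_0=4-\zeta_n-\zeta_n^{-1}$. The range $2\le\ell\le k-1$ is now empty, so there are no vanishing conditions at all.

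I would go through the claims in turn. Claim \ref{claim:small_support} used nothing about $k$, so $|\supp(P)|\le 4$, and $|\supp(P)|\le 3$ when $\varphi(n)=2$. The first half of the proof of Claim \ref{claim:support_exact} also carries over, so $\{0,1,3\}$ or $\{0,2,3\}$ is contained in $\supp(P)$. If $|\supp(P)|=3$, Claim \ref{claim:form_of_P} goes through verbatim. It gives $P=\lambda Q$ or $P=\lambda Q^{\#}$, and the final argument gives $v^{\#}=\lambda v$ or $u^{\#}=\lambda u$. That forces a common factor of $q$ and $q^{\#}$, contradicting $\gcd(q,q^{\#})=1$. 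So the only new case is the full support $\{0,1,2,3\}$. It can occur only when $\varphi(n)>2$, and it corresponds to the case that the condition $k\ge 3$ excluded before.

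For full support I would argue as in the $k=3$ computation. Put $\alpha=p_0/(\zeta_n-1)$. From $A_3$ we get $p_3=1/\ol{\alpha}$, and $P(1)=0$ gives $p_1+p_2=-p_0-p_3$. Rather than solving for these blindly, I would use the explicit shape: $p=(x-a)(1-\ol{b}x)$, so $P$ has coefficients
\[ a,\quad -a-1-a\ol{b},\quad 1+a\ol{b}+\ol{b},\quad -\ol{b}. \]
The trace identity \eqref{eq:trace_comparison} then reads $\sum_{i=0}^{3}\Tr_{\K/\Q}(p_i\ol{p_i})=4\varphi(n)-2\mu(n)$. By Lemma \ref{lemma:trace_inequality}, each of the four terms is at least $\varphi(n)$. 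This forces $\mu(n)\le 0$ and gives an equality case of AM--GM, up to a slack of at most $2$. When $\mu(n)=0$ it is exact equality. In the equality case every $p_i\ol{p_i}$ has all conjugates equal to $1$, so each $p_i$ is a root of unity by Kronecker. With $p_0=a$ and $p_3=-\ol{b}$ both roots of unity, $ab=1-\zeta_n$ becomes $|1-\zeta_n|=1$ in every embedding. This is impossible, since $|1-\zeta_n^j|$ is not constant over the primitive $j$ for $n>6$.

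The main obstacle is the residual situation $\mu(n)=-1$, where there is slack $2$ in the trace inequality. There I would use that the AM--GM slack is at least $\varphi(n)$ unless all conjugates of $p_i\ol{p_i}$ are near $1$, and argue as above. If that fails, I would fall back on a direct arithmetic argument: $q$ splits over $\K$ iff $4\zeta_n-3$ is a square in $\K$, which I would test via the norms $\prod_j|4\zeta_n^j-3|$ and the relation $\beta\ol{\beta}=\sqrt{25-12(\zeta_n+\zeta_n^{-1})}$.
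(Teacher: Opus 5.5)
Your reduction is sound and matches the paper: $\varphi(n)=2$ forces $|\supp(P)|\le 3$, and a support of size $3$ gives $P=\lambda Q$ or $P=\lambda Q^{\#}$, contradicting $\gcd(q,q^{\#})=1$. Your full-support argument for $\mu(n)=0$ is also correct (equality in Lemma~\ref{lemma:trace_inequality}, then Kronecker, then $|1-\zeta_n|=1$ in every embedding). But there is a genuine gap: the case $\mu(n)=-1$, which includes every prime $n\ge 5$, is never proved, so the proposal does not establish the lemma. The principle you would invoke there is false as stated. Take $\beta=1+\zeta_5$. It satisfies $\Tr_{\K/\Q}(\beta\ol{\beta})=2\varphi(5)+2\mu(5)=6=\varphi(5)+2$, yet its conjugates have absolute values $2\cos(\pi/5)\approx 1.618$ and $2\cos(2\pi/5)\approx 0.618$, which are not near $1$. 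The discriminant test is only announced, not carried out.

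The missing idea is to use the relation $A_3(P)=p_3\ol{p_0}=\zeta_n^{-1}-1$. You bound $p_0$ and $p_3$ separately by Lemma~\ref{lemma:trace_inequality}, which discards it. The paper applies it in each embedding $\sigma$. Since $|\sigma(p_0)||\sigma(p_3)|=|\sigma(\zeta_n)-1|$, AM--GM gives $|\sigma(p_0)|^2+|\sigma(p_3)|^2\ge 2|\sigma(\zeta_n)-1|>|\sigma(\zeta_n)-1|^2$. The last inequality is strict because $0<|\zeta_n^j-1|<2$ for $n>2$. Summing over embeddings gives $\Tr_{\K/\Q}(p_0\ol{p_0})+\Tr_{\K/\Q}(p_3\ol{p_3})>2\varphi(n)-2\mu(n)$. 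Adding the bound $\ge 2\varphi(n)$ for $p_1,p_2$ contradicts the trace identity for every $n$ at once, with no split on $\mu(n)$ and no use of Kronecker.

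Alternatively, your fallback can be finished. In $\N_{\K/\Q}(4\zeta_n-3)=\prod_{j}(3-4\zeta_n^j)$, only the top two coefficients of $\Phi_n$ survive modulo $16$. This gives $\N_{\K/\Q}(4\zeta_n-3)\equiv 3^{\varphi(n)}-4\mu(n)3^{\varphi(n)-1}\equiv 1-4\mu(n)\equiv 5\pmod{8}$ when $\mu(n)=\pm 1$. That is not a perfect square, so $4\zeta_n-3$ is not a square in $\K$ and $q$ is irreducible. Together with your $\mu(n)=0$ argument, this would be a complete and genuinely different proof. As written, however, neither of these steps is supplied.
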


\begin{proof}
First, assume $n=4$. Take $\zeta_4 = i$. Then by the above,
\[ r(x) = x + 1 + i,\]
and therefore $r(x)$ is irreducible.

So, assume that $n\neq 4$. Then $r(x) = q(x)$, gcd $g(x)=1$, and degree of $r$ is $2$. We will proceed similarly to the proof for $k\ge 3$ in Lemma \ref{lemma:k_atleast_3} given in Subsection \ref{subsec:k_atleast_3}. Suppose, to the contrary, that 
\[ r(x) = u(x)v(x),\]
where $u(x)$ and $v(x)$ are monic and linear. Let $p, P, Q$ be as before. Write 
\[P(x) = p_0 + p_1x + p_2x^2 + p_3x^3.\]

\begin{claim}\label{claim:full_support_P}
    We have $\supp(P) = \{0,1,2,3\}.$
\end{claim}

\begin{proof}
By \eqref{eq:autocorrelation_Q}, we have 
\begin{equation}\label{eq:lag_3_k_2}
    0\neq \zeta_n^{-1}-1 = A_3(P) =p_3\ol{p_0} 
\end{equation}
which implies $0,3\in \supp(P).$ Again, by \eqref{eq:autocorrelation_Q},
\begin{equation}
    0\neq  -\zeta_n^{-1} = A_2(P) =p_2\ol{p_0} + p_3\ol{p_1}
\end{equation}
which implies $ 1\in \supp(P) \text{ or }2\in \supp(P).$ Thus, $|\supp(P)|\ge 3$. Suppose, to the contrary, that $|\supp(P)| = 3$. We have the following possibilities:

\textbf{Case 1:} $\supp(P) = \{0,1,3\}$

Solving the equations 
\[  \zeta_n^{-1}-1 = p_3\ol{p_0}, \qquad -\zeta_n^{-1} = p_3\ol{p_1},\qquad P(1) = p_0 +p_1+p_3 = 0,\]
gives 
\[ p_0 = p_3(\zeta_n-1), \qquad p_1 = - p_3\zeta_n,\qquad p_3\ol{p_3}=1.\]
It follows that 
\[P = p_3 Q \quad \text{with}\quad p_3\ol{p_3}=1.\] 
This means $(x-1)uv^{\#} = p_3(x-1)uv$ implying $v^{\#} = p_3v$. Thus, $v$ divides both $q$ and $q^{\#}$, which gives $\gcd(q,q^{\#})\neq 1$, a contradiction.  

\textbf{Case 2:} $\supp(P) = \{0,2,3\}$

In this case, $\supp(P^{\#}) = \{0,1,3\}$. Arguing as in Case 1, we get a similar contradiction.
\end{proof}

Now, as in the proof of Claim \ref{claim:small_support} (see \eqref{eq:trace_comparison}), it holds that 
\begin{equation}\label{eq:trace_P_k_2}
   \sum_{i=0}^{3} \Tr_{\K/\Q}(p_i\ol{p_i}) = 4 \varphi(n)-2\mu(n). 
\end{equation}
By Claim \ref{claim:full_support_P}, $p_0, p_1, p_2, p_3$ are all non-zero. By Lemma \ref{lemma:trace_inequality}, we have 
\begin{equation}\label{eq:p_1_p_2_k_2}
    \Tr_{\K/\Q}(p_1\ol{p_1}) + \Tr_{\K/\Q}(p_2\ol{p_2})\ge 2\varphi(n).
\end{equation}
Moreover, for any embedding $\sigma:\K \rightarrow \C$,
\[ |\sigma(p_0)||\sigma(p_3)| = |\sigma(\zeta_n)-1|\]
by \eqref{eq:lag_3_k_2}. Using the AM-GM inequality, we get 
\[ |\sigma(p_0)|^2 + |\sigma(p_3)|^2 \ge 2|\sigma(\zeta_n)-1|.\]
Summing over all embeddings, 
\begin{align}\label{eq:p_0_p_3_k_2}
   \Tr_{\K/\Q}(p_0\ol{p_0}) +  \Tr_{\K/\Q}(p_3\ol{p_3})& \ge 2 \sum_{\substack{1\le j\le n\\\gcd(j,n)=1}} |\zeta_n^j-1|\nonumber\\ 
   & > \sum_{\substack{1\le j\le n\\\gcd(j,n)=1}} |\zeta_n^j-1|^2\nonumber\\
   & = \sum_{\substack{1\le j\le n\\\gcd(j,n)=1}} (2-\zeta_n^j-\zeta_n^{-j})\nonumber\\
   & = 2\varphi(n)-2\mu(n).
\end{align}
Here, the strict inequality holds because $|\zeta_n^j-1|<2$, and so $|\zeta_n^j-1|^2 < 2|\zeta_n^j-1|$. Combining \eqref{eq:p_1_p_2_k_2} and \eqref{eq:p_0_p_3_k_2}, we get 
\[ \sum_{i=0}^{3} \Tr_{\K/\Q}(p_i\ol{p_i}) > 4 \varphi(n)-2\mu(n),\]
which contradicts \eqref{eq:trace_P_k_2}. The proof of the lemma is complete.
\end{proof}

Combining Lemmas \ref{lemma:k_atleast_3} and \ref{lemma:k_equal_2} gives Theorem \ref{thm:r_x_irreducible}.

\subsection{Final step}

We are ready to finish the proof of Theorem \ref{thm:cyclotomic}. 

\begin{proof}[Proof of Theorem \ref{thm:cyclotomic}]
Observe that
\[F_{n,k}(x) = \prod_{\sigma\in \Gal(\K/\Q)}(s_k(x)-\sigma(\zeta_n)) =  \prod_{\sigma\in \Gal(\K/\Q)}\sigma(q(x)).\]
We consider the following cases:

\textbf{Case 1:} $k\ge 3$

If $(-\zeta_n^{-1})^{k+2}\ne 1$, then $g=1$ by Lemma \ref{lemma:gcd}. Thus, $q = r$, and by Theorem \ref{thm:r_x_irreducible} $q$ is irreducible over $\K$. Let $\alpha$ be a root of $q$. Then 
\[ \zeta_n = s_k(\alpha)\in \Q(\alpha),\]
which implies $\K\subseteq \Q(\alpha).$ Hence, the degree of extension
\[[\Q(\alpha):\Q] = [\Q(\alpha):\K][\K:\Q] = k \varphi(n).\]
But $\alpha$ is a zero of $F_{n,k}$ and degree of $F_{n,k}$ is also $k\varphi(n)$. Therefore, $F_{n,k}$ is the minimal polynomial of $\alpha$ over $\Q$, and so $F_{n,k}$ is irreducible.

Now, assume that $(-\zeta_n^{-1})^{k+2}= 1$. Then, by Lemma \ref{lemma:gcd}, 
\[q(x) = (x+\zeta_n^{-1})r(x),\]
where $r(x)$ is irreducible over $\K$ by Theorem \ref{thm:r_x_irreducible}.

Let $M$ denote the order of $-\zeta_n^{-1}$, i.e., $M$ is the smallest positive integer such that $(-\zeta_n^{-1})^M = 1$. Since $\Q(-\zeta_n^{-1}) =\K$, we have $\varphi(M) = \varphi(n)$ and
\[F_{n,k}(x) = \prod_{\sigma\in \Gal(\K/\Q)}\sigma(r(x))\prod_{\sigma\in \Gal(\K/\Q)}(x + \sigma(\zeta_n^{-1})) = H(x)\ \Phi_M(x),\]
where 
\[H(x) := \prod_{\sigma\in \Gal(\K/\Q)}\sigma(r(x))\in \Q[x].\]
Since cyclotomic polynomials are always irreducible, $\Phi_M(x)$ is irreducible. We will argue that $H(x)$ is irreducible. 

Consider a root $\beta$ of $r(x)$. Then 
\[\zeta_n = s_k(\beta)\in \Q(\beta),\]
implying $\K\subseteq \Q(\beta)$. Hence, 
\[[\Q(\beta):\Q] = [\Q(\beta):\K][\K:\Q] = (k-1) \varphi(n).\]
Here, $[\Q(\beta):\K] = k-1$ because  $r$ is irreducible over $\K$ and has degree $k-1$. Since $\beta$ is a root of $H(x)$ and degree of $H(x)$ is $(k-1)\varphi(n)$, $H(x)$ is precisely the minimal polynomial of $\beta$ over $\Q$. It follows that $H(x)$ must be irreducible. 

Using Lemma \ref{lemma:n_k_possibilities}, the proof of Theorem \ref{thm:cyclotomic} is complete when $k\ge 3$.

\textbf{Case 2:} $k = 2$

If $n\neq 4$, then $q = r$, and therefore $q$ is irreducible over $\K$  by Lemma \ref{lemma:k_equal_2}. A similar degree of extension argument as in Case 1 shows that $F_{n,2}$ is irreducible over $\Q$. 

Finally, suppose $n=4$. Take $\zeta_4 = i$. Then 
\[ F_{4,2}(x) = (1 +x+x^2-i)(1 +x+x^2+i) = (x^2 + 1)(x^2+2x+2),\]
and both quadratic factors on the right-hand side are clearly irreducible over $\Q$. This completes the proof of Theorem \ref{thm:cyclotomic} when $k=2$.  
\end{proof}

\section*{Acknowledgements}
The authors thank Gaurish Korpal for helpful comments. 

\section*{AI statement}

We acknowledge the use of AI tools during the ideation phase. We declare that the text is not AI-generated.

\bibliographystyle{plain}
\bibliography{references}

\vspace{0.4cm}

\affl{Jaskaran Kaur}{jkj10@sfu.ca}{Department of Mathematics, Simon Fraser University, Burnaby, Canada}

\affl{Hitesh Kumar}{hitesh.kumar.math@gmail.com, hitesh\_kumar@sfu.ca}{Department of Mathematics, Simon Fraser University, Burnaby, Canada}
\end{document}